\newtheorem{lemma}{Lemma}[section]
\newtheorem{theorem}[lemma]{Theorem}
\newtheorem{theorem*}{Theorem}
\newtheorem{corollary}[lemma]{Corollary}
\newtheorem{proposition}[lemma]{Proposition}
\theoremstyle{definition}
\newtheorem{remark}[lemma]{Remark}
\renewcommand{\theequation}%
{\arabic{section}.\arabic{lemma}.\arabic{equation}}
\renewcommand{\AA}{\ensuremath{\mathbb{A}}} 
\newcommand{\CC}{\ensuremath{\mathbb{C}}} 
\newcommand{\PP}{\ensuremath{\mathbb{P}}}
\newcommand{\sI}{\ensuremath{\mathcal{I}}}        
\newcommand{\sJ}{\ensuremath{\kern -2pt \mathscr{J}\kern -2pt}} 
\newcommand{\sO}{\ensuremath{\mathscr{O}}}
\renewcommand{\geq}{\geqslant}
\renewcommand{\leq}{\leqslant}
\newcommand{\equ}{\ensuremath{\,=\,}}
\newcommand{\deq}{\ensuremath{\stackrel{\textrm{def}}{=}}}
\newcommand{\dsubseteq}{\ensuremath{\,\subseteq\,}}
\newcommand{\brac}[1]{\ensuremath{\left( #1\right)}}
\newcommand{\dgeq}{\ensuremath{\,\geq\,}}
\newcommand{\dleq}{\ensuremath{\,\leq\,}}
\newcommand{\xa}{\ensuremath{x_1^{a_1}\dots x_n^{a_n}}}
\begin{document}

\title[Containment relations]{An elementary approach to containment relations between symbolic and ordinary powers of certain monomial ideals}

\author[R.~W.~Keane]{Ryan W. Keane}
\author[A.~K\" uronya]{Alex K\" uronya}
\address{Alex K\"uronya, Budapest University of Technology and Economics, Department of Algebra, Egry J\'ozsef u. 1., H-111 Budapest, Hungary}
\address{Goethe Universit\"at Frankfurt, Institut f\"ur Mathematik, Robert-Mayer-Str. 6-10, D-60325 Frankfurt am Main, Germany}
\email{{\tt alex.kuronya@math.bme.hu}}

\author[E.~McMahon]{Elise McMahon}

\thanks{2010 Mathematics Subject Classification: 14C99, 13C05}


%

\maketitle


\section{Introduction}

The purpose of this note is to find an elemenary explanation of a surprising result of Ein--Lazarsfeld--Smith \cite{ELS} and 
Hochster--Huneke \cite{HH} on the containment
between symbolic and ordinary powers of ideals in simple cases. This line of research has been very active ever since, see for instance \cites{BC,HaH,DST}
and the references therein, by now the literature on this topic is quite extensive. 

By `elementary' we refer to arguments that among others do not make use of resolution of singularities and multiplier ideals nor tight closure methods. 
Let us quickly recall the statement \cite{ELS}: let $X$ be a smooth projective variety  of dimension $n$, $\sI\subseteq\sO_X$ a non-zero sheaf of radical ideals  with zero scheme $Z\subseteq X$; if every irreducible component 
of $Z$ has codimension at least $e$, then
\[
 \sI^{(me)}_Z \dsubseteq \sI^m_Z
\]
for all $m\geq 1$. 

Our  goal is to reprove this assertion in the case of points in projective spaces (as asked in \cite{PAGII}*{Example 11.3.5})
without recurring to deep methods of algebraic  geometry. Instead of working with subsets of  projective space, 
we will concentrate on the affine cones over them;  our aim hence becomes to understand symbolic and ordinary powers  ideals
of sets of line through the origin. 

We will end up reducing the general case to a study of the ideals 
\[
I_{2,n} \deq \brac{x_ix_j\mid 1\leq i<j\leq n}\dsubseteq k[x_1,\dots,x_n]
\]
defining the union of coordinate axes in $\AA^n_{k}$. We work over an arbitrary field $k$. 
%

\begin{theorem*}[Corollary~\ref{thm:main}]
Let $\Sigma\subseteq \PP^{n-1}$ be a set of $n$ points not lying in a hyperplane. Then 
\[
 \sI_{\Sigma}^{(\lceil (2-\frac{2}{n})m \rceil) } \subseteq \sI^m_{\Sigma}
\]
for all positive integers $m$. 

If $n=3$, then the same statement holds for three distinct points in arbitrary position. 
\end{theorem*}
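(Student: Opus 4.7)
The plan is to derive both assertions by reducing to standard configurations via linear changes of coordinates, and then invoking either the main theorem on $I_{2,n}$ (for the general-position case) or a short complete-intersection argument (for the collinear subcase).

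\textbf{First assertion.} Since $\Sigma$ consists of $n$ points of $\PP^{n-1}$ not lying on any hyperplane, the one-dimensional subspaces of $k^n$ they represent span $k^n$. Choosing one generator per subspace yields a basis of $k^n$, and the corresponding linear automorphism of $\AA^n$ sends the affine cone over $\Sigma$ onto the union of coordinate axes, whose defining ideal is $I_{2,n}$. Since automorphisms of the coordinate ring preserve symbolic and ordinary powers of ideals as well as the containments between them, the claim reduces directly to the previously established inclusion $I_{2,n}^{(\lceil(2-2/n)m\rceil)} \subseteq I_{2,n}^m$.

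\textbf{Second assertion.} Three distinct non-collinear points in $\PP^2$ do not lie on a line, so the $n=3$ case of the first assertion already handles them. For three distinct \emph{collinear} points, a projective change of coordinates moves them to $[1:0:0]$, $[0:1:0]$, $[1:1:0]$ on the line $\{z=0\}$; the affine cone then becomes the union of three concurrent lines in the plane $\{z=0\}\subset\AA^3$, with defining ideal
\[
J \equ (z,\ xy(x-y)) \equ (z,x)\cap(z,y)\cap(z,x-y).
\]
Setting $g=xy(x-y)$, the pair $(z,g)$ forms a regular sequence in $k[x,y,z]$: $z$ is a nonzerodivisor, and $g$ is a nonzerodivisor in the domain $k[x,y,z]/(z)\cong k[x,y]$. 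Hence $J$ is a complete intersection of height $2$ in the Cohen--Macaulay ring $k[x,y,z]$. For such an ideal, the associated graded algebra satisfies $\operatorname{gr}_J(R)\cong (R/J)[T_1,T_2]$, from which one deduces that $R/J^m$ is Cohen--Macaulay; therefore $J^m$ is unmixed and coincides with its symbolic power. Consequently
\[
J^{(\lceil 4m/3 \rceil)} \subseteq J^{(m)} \equ J^m,
\]
finishing the collinear subcase.

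\textbf{Expected main obstacle.} The only subtle point is recognizing that collinearity collapses the configuration to a complete intersection; once this is seen, the inclusion becomes automatic and the bound $\lceil 4m/3\rceil$ is in fact very far from sharp in this subcase. In the non-collinear case all the technical work lies in the analysis of $I_{2,n}$ carried out earlier in the paper; the new content here is merely the observation that any $n$ linearly independent lines through the origin in $\AA^n$ can be linearly transformed into the coordinate axes.
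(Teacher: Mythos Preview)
Your argument is correct and follows essentially the same route as the paper: the general-position case is reduced by an invertible linear change of coordinates to the previously established inclusion for $I_{2,n}$, and the collinear subcase for $n=3$ is handled by observing that three coplanar lines through the origin form a complete intersection, so symbolic and ordinary powers coincide. You supply more detail on the complete-intersection step (the explicit regular sequence and the associated-graded argument) than the paper does, but the strategy is the same.
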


\noindent
\textbf{Acknowledgements.} This work is the result of an `Elective Undergraduate Research' class at the Budapest Semesters in Mathematics 
in Spring 2014. After our proofs were completed, Brian Harbourne and Tomasz Szemberg have pointed out closely related work in the area 
(see Remark~\ref{rmk:Brian},  Remark~\ref{rmk:Brian2}, and Remark~\ref{rmk:Tomek}, respectively, for more precise statements and references) 
for which we are grateful.


\section{Proof of the main result}

The key idea is to describe the primary decomposition of $I_{2,n}$.

\begin{proposition}\label{prop:main technical}
With notation as above, the expression
\begin{equation}\label{primary}
 I_{2,n}^m \equ   \brac{\bigcap_{i=1}^{n}(x_1,\dots,\widehat{x_i},\dots,x_n)^m} \cap (x_1,\dots,x_n)^{2m}
\end{equation}
is a primary decomposition of $I_{2,n}^m$  for all positive integers $m$. 
\end{proposition}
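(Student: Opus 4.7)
The plan is to check the equality \eqref{primary} monomial by monomial, and then verify primariness of each component on the right-hand side separately. Since $I_{2,n}$ is generated by monomials, so is every power $I_{2,n}^m$; the right-hand side, being an intersection of powers of ideals generated by variables, is likewise a monomial ideal. Hence the equality need only be tested on individual monomials $x^{\alpha} = x_1^{a_1}\cdots x_n^{a_n}$.

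The inclusion ``$\subseteq$'' is immediate. For any $1 \leq i < j \leq n$ and any $k$, the generator $x_i x_j$ belongs to $(x_1,\dots,\widehat{x_k},\dots,x_n)$ (at least one of $i,j$ differs from $k$) and to $(x_1,\dots,x_n)^2$. Raising to the $m$-th power yields the containment in each of the $n+1$ components.

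For the reverse inclusion, set $s \deq \sum_j a_j$. Membership of $x^{\alpha}$ in the right-hand side is equivalent to the arithmetic system $s \geq 2m$ together with $a_k \leq s-m$ for every $k$. Membership of $x^{\alpha}$ in $I_{2,n}^m$ is equivalent to the existence of a loopless multigraph $G$ on vertex set $\{1,\dots,n\}$ with exactly $m$ edges and $\deg_G(k) \leq a_k$ for all $k$: a generator of $I_{2,n}^m$ is precisely a product $\prod_{\ell=1}^m x_{i_\ell}x_{j_\ell}$ whose exponent vector records the vertex-degrees of the multigraph with edge multiset $\{\{i_\ell,j_\ell\}\}$, and divisibility of $x^{\alpha}$ by that product translates exactly into the degree bounds. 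I would then construct such a $G$ greedily: at each stage add an edge between the two vertices of largest residual capacity $a_k - \deg(k)$ and decrement both by one. After $\ell$ edges have been placed, the pair of invariants to be maintained is $\sum_k(a_k - \deg(k)) \geq 2(m - \ell)$ and $\max_k(a_k - \deg(k)) \leq \sum_k(a_k - \deg(k)) - (m-\ell)$. The main obstacle, and the combinatorial core of the argument, is verifying that these inequalities survive every greedy step; this requires a short case analysis depending on whether the maximum residual capacity stays at one of the just-decremented vertices or jumps to a different vertex.

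It remains to verify that each component of the proposed decomposition is primary. Each ideal $(x_1,\dots,\widehat{x_k},\dots,x_n)$ is generated by a subset of the variables, hence by a regular sequence in the polynomial ring, so its $m$-th power coincides with its $m$-th symbolic power and is $(x_1,\dots,\widehat{x_k},\dots,x_n)$-primary. The final component $(x_1,\dots,x_n)^{2m}$ is primary to the maximal ideal $(x_1,\dots,x_n)$. This confirms that \eqref{primary} is a primary decomposition of $I_{2,n}^m$.
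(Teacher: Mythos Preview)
Your proposal is correct and follows essentially the same route as the paper: reduce to a monomial-by-monomial check, then greedily peel off a generator $x_ix_j$ corresponding to the two largest (residual) exponents while verifying that the membership inequalities---your two invariants---persist with $m$ replaced by $m-1$, the key case being when the third-largest exponent ties with the first. The paper packages this as induction on $m$, first reducing to $\sum a_i = 2m$ and treating the all-equal-exponents case by an explicit factorization, whereas your invariant formulation handles everything uniformly; the multigraph language is a pleasant repackaging, but the combinatorial core is identical.
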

\begin{proof} 
Being an irredundant intersection of primary ideals, the right-hand side of (\ref{primary}) will be a primary decomposition of 
$I_{2,n}^m$ once equality holds.

We assume that $n\geq 3$, the other cases being trivial. 
Since  $I_{2,n} = \cap_{i=1}^{n}(x_1,\dots,\widehat{x_i},\dots,x_n)$ and $I_{2,n}\subseteq I_{1,n}^2$, we have 
\[
  I_{2,n}^m \dsubseteq   \brac{\bigcap_{i=1}^{n}(x_1,\dots,\widehat{x_i},\dots,x_n)^m} \cap (x_1,\dots,x_n)^{2m}\ .
\]
For the reverse inclusion, note that both sides are monomial ideals, therefore it will suffice to check the required containment
for monomials by \cite{HeHi}*{Theorem 1.1.2}

A monomial $\xa$ is an element of the right-hand side of (\ref{primary}) if and only if the inequalities
 \begin{eqnarray}\label{ineq1} 
 a_1 + \ldots + \widehat{a_i} + \ldots + a_n  & \dgeq m & \text{ for all $1\leq i\leq n$,} \\
\label{ineq2} \sum_{i=1}^{n} a_i & \dgeq & 2m  
\end{eqnarray} 
hold. We will show by induction on $m$ that under these conditions $\xa\in I_{2,n}^m$. For the base case $m=1$ of the induction, one has 
\[
 I_{2,n} \equ \bigcap_{i=1}^{n}(x_1,\dots,\widehat{x_i},\dots,x_n) \dsubseteq  I_{1,n}\ ,
\]
as $I_{2,n}$ is the radical ideal of the union of all coordinate axes in $\AA^n$. 

Let us now assume  that (\ref{primary}) holds for all integers less than $m$, and let 
\[
 \xa \in \brac{\bigcap_{i=1}^{n}(x_1,\dots,\widehat{x_i},\dots,x_n)^m} \cap (x_1,\dots,x_n)^{2m}
\]
be an arbitrary monomial, without loss of generality we will suppose that $a_1\geq\ldots\geq a_n$. It follows from  the inequality
(\ref{ineq2}) that the total degree of $\xa$ is at least $2m$. 

Let us first consider the case $a_1=\ldots=a_n=a$ and $\sum_{i=1}^{n}a_i=2m$. If $n$ is even, then 
\[
\xa \equ x_1^a\ldots x_n^a \equ  (x_1x_2)^a\ldots (x_{n-1}x_n)^a \in I_{2,n}^m\ ,
\]
if $n$ is odd, then $a=2b$ for some natural number $b$ (since $an = 2m$), and 
\[
\xa \equ  x_1^a\ldots x_n^a \equ (x_1x_2)^b(x_2x_3)^b\ldots (x_{n-1}x_n)^b(x_nx_1)^b  \in I_{2,n}^m\ .
\]
From now on we will assume that not all exponents $a_i$ are the same, and keep the assumption that $a_1\geq\ldots\geq a_n$. 
Also, without loss of generality we will suppose that $A\deq  \sum_{i=1}^{n}a_i =  \deg \xa = 2m$: if it were strictly higher, then either 
$A-a_i> m$ for all $1\leq i\leq n$, and we can  divide $\xa$ by some variable and still obtain a monomial in the same ideal
(and then possibly reorder the variables to preserve  the decreasing exponents), or, 
if $A-a_1=m$ (which is the smallest one among the $A-a_i$'s), then $a_1=A-m>m$, hence replacing $\xa$ by $x_1^{a_1-1}x_2^{a_2}\ldots x_n^{a_n}$ will again do the trick. 

We proceed by  a greedy algorithm: we intend to show that 
\[
 x_1^{a_1-1}x_2^{a_2-1}x_3^{a_2}\ldots x_n^{a_n} \in I_{2,n}^{m-1}
\]
by verifying that the exponent vector satifsfies that system (\ref{primary}) for $m-1$. Granting this, it follows that
\[
\xa \equ (x_1x_2)\cdot x_1^{a_1-1}x_2^{a_2-1}x_3^{a_3}\ldots x_n^{a_n} \in I_{2,n}^m\ ,
\]
as required. 

We need to check that
\[
 A-(a_i-d_i) \dgeq m-1 \ \ \text{for all $1\leq i\leq n$,}
\]
where $d_1=d_2=1$, and $d_i=0$  for all $i\geq 3$, and 
\[
 A - 2 \geq 2(m-1)\ .
\]
Of these, the last equality is immediate, we will deal with the rest. As $a_1\geq a_2\geq\ldots\geq a_n$, we have
\[
 m\dleq A-a_1 \dleq \ldots \dleq  A-a_n\ ,
\]
hence we are done whenever either $i=1,2$, or $i\geq 3$ and $a_i<a_1$. 

Let us suppose that this is not the case, and there exists an index $3\leq i\leq n$ such that $a_1=a_i$. By the ordering of the $a_i$'s
this automatically means $a_1=\ldots = a_j = \ldots=a_i$ for all $1\leq j\leq i$. Note that even in this case, $A-a_i-2\geq m-1$ if 
$A-a_i>m$, hence aiming at a contradiction we can assume $A-a_i=m$. On the other hand, 
\[
 2m \equ  A \equ m+a_i\ ,
\]
therefore $a_1=\ldots = a_i =  m$, and consequently, 
\[
 2m \equ A \dgeq a_1+\dots+a_i \dgeq 3m\ ,
\]
a contradiction. 
\end{proof}

\begin{lemma}\label{symbolicpowers}
With notation as above, 
\[
 I_{2,n}^{(m)} \equ \bigcap_{i=1}^{n} (x_1,\dots,\widehat{x_i},\dots,x_n)^m \ .
\]
\end{lemma}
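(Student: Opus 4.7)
The plan is to read off $I_{2,n}^{(m)}$ directly from the primary decomposition already established in Proposition~\ref{prop:main technical}. Recall that for a radical ideal $J$ in a Noetherian ring, its $m$-th symbolic power $J^{(m)}$ is the intersection of those primary components in any primary decomposition of $J^m$ whose radicals are minimal over $J$. Since $I_{2,n}$ is the radical ideal of the union of the coordinate axes in $\AA^n_k$, its minimal primes are precisely
\[
P_i \deq (x_1,\dots,\widehat{x_i},\dots,x_n), \qquad 1\leq i\leq n,
\]
and the maximal ideal $(x_1,\dots,x_n)$ strictly contains each of them.

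The only auxiliary point I would record is that each $P_i^m$ is $P_i$-primary: as $P_i$ is generated by a subset of the variables, it is a complete intersection in $k[x_1,\dots,x_n]$, so its ordinary and symbolic powers coincide (equivalently, a direct monomial check shows that $P_i^m R_{P_i}\cap R = P_i^m$). Granting this, Proposition~\ref{prop:main technical} presents
\[
 I_{2,n}^m \equ  \brac{\bigcap_{i=1}^{n} P_i^m} \cap (x_1,\dots,x_n)^{2m}
\]
as a primary decomposition in which the first $n$ components have radicals $P_1,\dots,P_n$ (the minimal primes of $I_{2,n}$) while the last component is associated to the embedded prime $(x_1,\dots,x_n)$. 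Extracting only the minimal components then gives
\[
 I_{2,n}^{(m)} \equ \bigcap_{i=1}^{n} P_i^m \equ \bigcap_{i=1}^{n} (x_1,\dots,\widehat{x_i},\dots,x_n)^m\ ,
\]
as desired.

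The substantive step has already been carried out in Proposition~\ref{prop:main technical}, so the present lemma is essentially a matter of discarding the embedded component $(x_1,\dots,x_n)^{2m}$. The only place where care is required is in justifying that the $P_i^m$ on the right-hand side are genuinely primary (so that the word ``primary decomposition'' in Proposition~\ref{prop:main technical} is meant in its standard sense); this is the one spot where a sentence of commentary is warranted, and I expect it to be the main, and only, obstacle.
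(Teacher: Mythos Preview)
Your argument is correct, but it takes a different and somewhat heavier route than the paper. You extract $I_{2,n}^{(m)}$ from the primary decomposition of $I_{2,n}^m$ supplied by Proposition~\ref{prop:main technical}, discarding the embedded $(x_1,\dots,x_n)^{2m}$ component. The paper instead works directly from the decomposition of $I_{2,n}$ itself: since $I_{2,n}=\bigcap_i P_i$ with each $P_i$ a minimal prime, one has $I_{2,n}^{(m)}=\bigcap_i P_i^{(m)}$ by the general description of symbolic powers of radical ideals, and then $P_i^{(m)}=P_i^m$ because each $P_i$ is generated by a regular sequence of variables. Both arguments hinge on the same auxiliary fact (that $P_i^m$ is $P_i$-primary), and both are valid; the paper's version is simply more economical in that it does not invoke the main technical Proposition~\ref{prop:main technical}, making the lemma logically independent of it.
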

\begin{proof}
Follows quickly from the facts that symbolic and ordinary powers of $(x_1,\dots,x_i)$ agree, and the fact that 
\[
 I_{2,n} \equ \bigcap_{i=1}^{n} (x_1,\dots,\widehat{x_i},\dots,x_n)
\]
is an irredundant intersection where all prime ideals on the right-hand side are minimal. 
\end{proof}

\begin{remark}\label{remark:symbolic-inequalities}
 Observe that for a monomial $\xa\in k[x_1,\dots,x_n]$, $\xa\in I_{2,n}^{(m)}$ if and only if $A-a_i\geq m$ for all $1\leq i\leq n$.
\end{remark}

\begin{proposition}
 With notation as above, 
 \[
  I_{2,n}^{(\lceil (2-\frac{2}{n})m \rceil )} \dsubseteq I_{2,n}^{m}
 \]
for all $m\geq 1$.
\end{proposition}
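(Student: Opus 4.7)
The plan is to exploit the two combinatorial descriptions already obtained: by Proposition~\ref{prop:main technical}, a monomial $\xa$ lies in $I_{2,n}^m$ exactly when $A - a_i \geq m$ for every $i \in \{1,\ldots,n\}$ and $A \geq 2m$, where $A \deq \sum_j a_j$; by Remark~\ref{remark:symbolic-inequalities}, membership $\xa \in I_{2,n}^{(k)}$ is governed by only the first family of inequalities. Both ideals are monomial, so by \cite{HeHi}*{Theorem~1.1.2} it suffices to check the containment monomial by monomial.

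Set $k \deq \lceil (2 - 2/n)m \rceil$ and take an arbitrary monomial $\xa \in I_{2,n}^{(k)}$. By Remark~\ref{remark:symbolic-inequalities} one has $A - a_i \geq k$ for every $i$. To conclude $\xa \in I_{2,n}^m$ via Proposition~\ref{prop:main technical}, two things must be verified. First, that $A - a_i \geq m$ for every $i$: this is immediate, since $k \geq m$ whenever $n \geq 2$ (because $2 - 2/n \geq 1$). Second, that $A \geq 2m$: summing the $n$ inequalities $A - a_i \geq k$ over $i$ collapses each $a_i$ once on the left and yields $(n-1)A \geq nk$. Substituting $k \geq (2 - 2/n)m = 2(n-1)m/n$ then gives $A \geq nk/(n-1) \geq 2m$, as required.

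Because the proof reduces to a one-line averaging step once the primary decomposition is in hand, there is no real obstacle to overcome. The only mild bookkeeping concerns the ceiling: one uses the real-number inequality $k \geq (2 - 2/n)m$ throughout and recovers the integer conclusion $A \geq 2m$ at the end from the fact that the weaker real bound $nk/(n-1) \geq 2m$ already suffices. The constant $2 - 2/n$ is not accidental; it is precisely the threshold at which symmetric bounds of the form $A - a_i \geq cm$ self-improve via summation to $A \geq 2m$, which is exactly the extra condition distinguishing $I_{2,n}^m$ from $I_{2,n}^{(m)}$ in Proposition~\ref{prop:main technical}.
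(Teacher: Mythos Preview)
Your argument is correct and follows essentially the same route as the paper: reduce to monomials, use Remark~\ref{remark:symbolic-inequalities} to get $A-a_i\geq k$, sum over $i$ to obtain $(n-1)A\geq nk$, and then invoke Proposition~\ref{prop:main technical}. If anything, you are slightly more explicit than the paper in separately checking the condition $A-a_i\geq m$ via $k\geq m$, which the paper leaves implicit.
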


\begin{proof}
Assume  $d\geq \lceil (2-\frac{2}{n})m \rceil$.  For a monomial $\xa\in I_{2,n}^{(d)}$, 
Remark~\ref{remark:symbolic-inequalities} implies $A-a_i\geq d $ for all $1\leq i\leq n$, hence  
\[
 nA - A \dgeq nd\ ,
\]
and 
\[
 A \dgeq \frac{n}{n-1}d \dgeq \frac{n}{n-1}\cdot (2-\frac{2}{n})m \dgeq 2m\ ,
\]
and $\xa\in I_{2,n}^m$ by Proposition~\ref{prop:main technical} as required. 
\end{proof}

\begin{remark}
It appears that in the concrete case the bound from \cite{ELS}, which is $2m$, where the $2$ stands for the codimension of the union of coordinate axes), 
is far from optimal even for $n=3$. 
\end{remark}

\begin{theorem}\label{thm:coordinate lines to arbitrary}
Let $X\subseteq \AA^n_k$ be a union of $n$ lines through the origin not lying in a hyperplane, where $k$ is an arbitrary field. Then
\[
 I(X)^{(\lceil (2-\frac{2}{n})m \rceil )} \dsubseteq I(X)^m 
\]
for all $m\geq 1$. 

If $n=3$ then the statement holds for any  three distinct lines through the origin.  
\end{theorem}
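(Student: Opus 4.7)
The plan is to dispose of the theorem in two cases: the generic configuration reduces by a linear change of coordinates to the ideal $I_{2,n}$ already handled in the preceding proposition, while the degenerate sub-case of three coplanar lines in $\AA^3$ is treated separately by recognising $I(X)$ as a radical complete intersection.

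For the main statement, suppose $X = L_1 \cup \ldots \cup L_n$ is not contained in any hyperplane. Pick nonzero direction vectors $v_i \in L_i$; by hypothesis they are linearly independent, so there exists $T \in \mathrm{GL}_n(k)$ with $T(v_i) = e_i$ for all $i$. The induced ring automorphism $T^* \colon k[x_1, \ldots, x_n] \to k[x_1, \ldots, x_n]$, $T^*(\varphi) = \varphi \circ T$, carries $I_{2,n}$ to $I(X)$. Since ring isomorphisms commute with the formation of ordinary powers and of primary decompositions, and therefore also with symbolic powers, the containment of the preceding proposition transports to $I(X)^{(\lceil(2-\frac{2}{n})m\rceil)} \dsubseteq I(X)^m$.

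For the supplementary $n=3$ statement, the non-coplanar sub-case is already covered. When the three distinct lines are coplanar, a linear change of coordinates places their common plane at $\{x_3 = 0\}$; each $L_i$ is then cut out by $(x_3, \ell_i)$ for pairwise non-proportional linear forms $\ell_1, \ell_2, \ell_3 \in k[x_1, x_2]$, and one verifies $I(X) = \bigcap_{i=1}^{3}(x_3, \ell_i) = (x_3, f)$ with $f \deq \ell_1 \ell_2 \ell_3$. The pair $(x_3, f)$ is a regular sequence, so $I(X)$ is a radical complete intersection, and I would argue that $I(X)^{(m)} = I(X)^m$ for every $m \geq 1$. This can be done directly: expanding any $g \in I(X)^{(m)} = \bigcap_i (x_3, \ell_i)^m$ as $g = \sum_{d \geq 0} x_3^d p_d$ with $p_d \in k[x_1, x_2]$, membership in each $(x_3, \ell_i)^m$ is equivalent to $\ell_i^{\max(0, m-d)} \mid p_d$ for every $d$; pairwise coprimality of the $\ell_i$ in the UFD $k[x_1, x_2]$ upgrades this to $f^{\max(0, m-d)} \mid p_d$, after which the pieces assemble into the generators $x_3^d f^{m-d}$ of $(x_3, f)^m$. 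Since $\lceil 4m/3 \rceil \geq m$, the desired containment follows immediately.

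The main difficulty lies precisely in the coplanar sub-case: three coplanar lines cannot be sent to the three coordinate axes of $\AA^3$ by any linear change of coordinates, so the reduction strategy of the first part breaks down. The rescue is that, in this degenerate situation, $I(X)$ collapses to a complete intersection of only two generators, causing symbolic and ordinary powers to coincide and trivializing the containment.
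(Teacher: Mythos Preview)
Your argument is correct and follows the same strategy as the paper: reduce the generic case to $I_{2,n}$ via a linear change of coordinates, and handle three coplanar lines separately as a complete intersection. The paper's proof of the coplanar sub-case is a one-line appeal to the general fact that symbolic and ordinary powers of a complete intersection coincide; your explicit identification $I(X)=(x_3,\ell_1\ell_2\ell_3)$ together with the coefficient-by-coefficient divisibility argument is more detailed than what the paper gives, but arrives at the same conclusion by the same mechanism.
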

\begin{proof}
 Let $\phi\colon \AA^n\to \AA^n$ be an invertible  linear map taking the coordinates axes in $\AA^n$ to the lines in $X$. Then $\phi^*$
 induces an automorphism of $\CC[x_1,\dots,x_n]$, under which $I(X)^{(\lceil (2-\frac{2}{n})m \rceil )}$ corresponds to 
 $I_{2,n}^{(\lceil (2-\frac{2}{n})m \rceil )}$, and $I(X)^m$ corresponds to $I_{2,n}^m$. 

 As far as the case $n=3$ is concerned, observe that three lines through the origin that lie in a plane form a complete intersection subvariety, 
 and as such, $m$\textsuperscript{th} symbolic and ordinary powers of its vanishing ideal will agree for any given $m\geq 1$. 
 \end{proof}

\begin{corollary}\label{thm:main}
Let $\Sigma\subseteq\PP^{n-1}_{k}$ be a set of $n$ points not lying in a hyperplane. Then 
\[
 \sI_{\Sigma}^{(\lceil (2-\frac{2}{n})m \rceil )} \subseteq \sI^m_{\Sigma}
\]
for all positive integers $m$. 

If $n=3$, then the result holds for any three distinct points. 
\end{corollary}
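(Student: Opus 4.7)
The plan is to reduce the projective statement to the affine statement already proved in Theorem~\ref{thm:coordinate lines to arbitrary} by passing to the affine cone. Given a finite set $\Sigma=\{p_1,\dots,p_n\}\subseteq \PP^{n-1}_k$, let $X\subseteq\AA^n_k$ denote the affine cone over $\Sigma$, i.e.\ the union of the $n$ lines through the origin obtained by lifting the projective points along the standard quotient $\AA^n\setminus\{0\}\to\PP^{n-1}$. Since $\sI_\Sigma$ is the (sheafification of the) saturated homogeneous ideal of $\Sigma$, it coincides with the ideal $I(X)\subseteq k[x_1,\dots,x_n]$ of the cone.

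Next I would observe that taking cones is compatible with both ordinary and symbolic powers in this situation. For ordinary powers this is formal: $I(X)^m$ is computed inside the same polynomial ring and matches $\sI_\Sigma^m$. For symbolic powers, the associated primes of $I(X)$ are exactly the homogeneous ideals $\mathfrak{p}_i$ of the $n$ lines, and these in turn correspond bijectively to the (homogeneous) maximal ideals of the points of $\Sigma$ in the homogeneous coordinate ring; hence
\[
I(X)^{(m)} \equ \bigcap_{i=1}^{n}\mathfrak{p}_i^{m}
\]
agrees with the $m$\textsuperscript{th} symbolic power of $\sI_\Sigma$. The hypothesis that the $n$ points do not lie in a hyperplane of $\PP^{n-1}$ translates exactly to the statement that the $n$ lines through the origin do not lie in a hyperplane of $\AA^n$, so Theorem~\ref{thm:coordinate lines to arbitrary} applies and yields
\[
\sI_{\Sigma}^{(\lceil (2-\frac{2}{n})m\rceil)} \equ I(X)^{(\lceil (2-\frac{2}{n})m\rceil)} \dsubseteq I(X)^m \equ \sI_\Sigma^m\, .
\]

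For the case $n=3$, any three distinct points of $\PP^2$ lift to three distinct lines through the origin in $\AA^3$ (whether or not the original points are collinear), and the second statement of Theorem~\ref{thm:coordinate lines to arbitrary} covers all such configurations. So no additional argument is needed.

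The only real content is the compatibility of symbolic powers with coning, and this is essentially tautological since the primary decomposition of $I(X)$ into the ideals of the $n$ lines mirrors the decomposition of $\sI_\Sigma$ into the ideals of the $n$ points; no serious obstacle is expected and the proof is a translation between the projective and affine formulations rather than a new argument.
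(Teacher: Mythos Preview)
Your proposal is correct and matches the paper's approach: the paper states the corollary without proof, since it is meant to follow immediately from Theorem~\ref{thm:coordinate lines to arbitrary} via the affine-cone/homogeneous-coordinate-ring correspondence, which is exactly what you spell out. Your added remarks on why symbolic and ordinary powers are preserved under this correspondence are appropriate and fill in the details the paper leaves implicit.
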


In the rest of the article  we outline connections to closely related work in the area. For terminology not used in our paper we refer 
to \cites{BC, BC2, PS}.

\begin{remark}\label{rmk:Brian}
  As $n$ general points in $\PP^{n-1}$ form a star configuration, the initial degree and the regularity of the associated ideal agree, hence 
  \cite{BC2}*{Corollary 1.2} applies. Combined with \cite{PS}*{Lemma 8.4.7 (c)},  this gives a way to establish a bound similar in nature to 
Corollary~\ref{thm:main}. 
\end{remark}

\begin{remark}\label{rmk:Brian2}
In \cite{PS}*{Example 8.4.5} a bound  stronger then the one in \cite{ELS} is shown for all monomial ideals. Nevertheless, in the particular case we treated
the bound of \cite{PS}*{Example 8.4.5} is weaker than Corollary~\ref{thm:main}.  
\end{remark}

\begin{remark}\label{rmk:Tomek}
 Multiplier ideals of line arrangements and their relationship to symbolic powers  were studied for instance in \cite{T1} and  \cite{T2}. 
\end{remark}


\begin{bibdiv}
\begin{biblist}

\bib{BC}{article}{
label={BC},
author={Bocci, Cristiano},
  AUTHOR = {Harbourne, Brian},
     TITLE = {Comparing powers and symbolic powers of ideals},
   JOURNAL = {J. Algebraic Geom.},
    VOLUME = {19},
      YEAR = {2010},
    NUMBER = {3},
     PAGES = {399--417},
       DOI = {10.1090/S1056-3911-09-00530-X},
       URL = {http://dx.doi.org/10.1090/S1056-3911-09-00530-X},
}

\bib{BC2}{article}{
    label={BC2},
    author={Bocci,Cristiano},
    AUTHOR = {Harbourne, Brian},
     TITLE = {The resurgence of ideals of points and the containment
              problem},
   JOURNAL = {Proc. Amer. Math. Soc.},
    VOLUME = {138},
      YEAR = {2010},
    NUMBER = {4},
     PAGES = {1175--1190},
       DOI = {10.1090/S0002-9939-09-10108-9},
       URL = {http://dx.doi.org/10.1090/S0002-9939-09-10108-9},
}

\bib{DST}{article}{
   label={DST},
   author={Dumnicki, Marcin},
   author={Szemberg, Tomasz},
   author={Tutaj-Gasi{\'n}ska, Halszka},
   title={Counterexamples to the $I^{(3)}\subset I^2$ containment},
   journal={J. Algebra},
   volume={393},
   date={2013},
   pages={24--29},
   issn={0021-8693},
   review={\MR{3090054}},
   doi={10.1016/j.jalgebra.2013.06.039},
}

\bib{ELS}{article}{
   label={ELS},
   author={Ein, Lawrence},
   author={Lazarsfeld, Robert},
   author={Smith, Karen E.},
   title={   },
   journal={Inv. Math.},
   volume={111},
   date={111},
   number={111},
   pages={551--566 (electronic)},
   issn={0002-9947},
   review={\MR{1932713 (2003j:14007)}},
   doi={10.1090/S0002-9947-02-03180-X},
}

\bib{HeHi}{book}{
label={HeHi},
AUTHOR = {Herzog, J{\"u}rgen},  
author={Hibi, Takayuki},
     TITLE = {Monomial ideals},
    SERIES = {Graduate Texts in Mathematics},
    VOLUME = {260},
 PUBLISHER = {Springer-Verlag London, Ltd., London},
      YEAR = {2011},
     PAGES = {xvi+305},
       DOI = {10.1007/978-0-85729-106-6},
       URL = {http://dx.doi.org/10.1007/978-0-85729-106-6},
}

\bib{HaH}{article}{
label={HaH},
 AUTHOR = {Harbourne, Brian},
 author={Huneke, Craig},
     TITLE = {Are symbolic powers highly evolved?},
   JOURNAL = {J. Ramanujan Math. Soc.},
    VOLUME = {28A},
      YEAR = {2013},
     PAGES = {247--266},
}

\bib{HH}{article}{
label={HH},
author={Hochster, Melvin},
author={Huneke, Craig},
     TITLE = {Comparison of symbolic and ordinary powers of ideals},
   JOURNAL = {Invent. Math.},
    VOLUME = {147},
      YEAR = {2002},
    NUMBER = {2},
     PAGES = {349--369},
       DOI = {10.1007/s002220100176},
       URL = {http://dx.doi.org/10.1007/s002220100176},
}


\bib{PAGII}{book}{
  label={PAG2},
  author={Lazarsfeld, Robert},
  title={Positivity in algebraic geometry. II},
  series={Ergebnisse der Mathematik und ihrer Grenzgebiete. 3. Folge.},
  volume={49},
  publisher={Springer-Verlag},
  place={Berlin},
  date={2004},
  pages={xviii+385},
}

\bib{PS}{incollection}{
label={PS},
 AUTHOR = {Bauer, Thomas},
 author={Di Rocco, Sandra},
 author={Harbourne, Brian},
 author={Kapustka, Micha{\l}},
 author={Knutsen, Andreas},
 author={Syzdek, Wioletta},
 author={Szemberg, Tomasz},
     TITLE = {A primer on {S}eshadri constants},
 BOOKTITLE = {Interactions of classical and numerical algebraic geometry},
    SERIES = {Contemp. Math.},
    VOLUME = {496},
     PAGES = {33--70},
 PUBLISHER = {Amer. Math. Soc., Providence, RI},
      YEAR = {2009},
       DOI = {10.1090/conm/496/09718},
       URL = {http://dx.doi.org/10.1090/conm/496/09718},
}

\bib{T1}{article}{
   label={T1},
   author={Teitler, Zachariah C.},
   title={Bounding symbolic powers via asymptotic multiplier ideals},
   journal={Ann. Univ. Paedagog. Crac. Stud. Math.},
   volume={8},
   date={2009},
   pages={67--77},
   
}

\bib{T2}{article}{
   label={T2},
   author={Teitler, Zachariah C.},
   title={Multiplier ideals of general line arrangements in $\mathbb{C}^3$},
   journal={Comm. Algebra},
   volume={35},
   date={2007},
   number={6},
   pages={1902--1913},
   doi={10.1080/00927870701247005},
}
		
\end{biblist}
\end{bibdiv}

\raggedright

\end{document}